\long\def\symbolfootnote[#1]#2{\begingroup%
\def\thefootnote{\fnsymbol{footnote}}\footnote[#1]{#2}\endgroup}
\newtheorem{theorem}{Theorem}[section]
\newtheorem{algorithm}{Algorithm}[section]
\newtheorem{problem}{Problem}[section]
\newcommand{\Prob}{\mathbb{P}}
\newcommand{\Expect}{\mathbb{E}}
\newcommand{\indic}{\mathbb{I}}
\long\def\symbolfootnote[#1]#2{\begingroup%
\def\thefootnote{\fnsymbol{footnote}}\footnote[#1]{#2}\endgroup}
\DeclareMathOperator*{\esssup}{ess\,sup}
\newcommand{\CADD}{{\mathsf{CADD}}}
\newcommand{\WADD}{{\mathsf{WADD}}}
\newcommand{\FAR}{{\mathsf{FAR}}}
\newcommand{\PDC}{{\mathsf{PDC}}}
\newcommand{\PTC}{{\mathsf{PTC}}}
\newcommand{\tauc}{\tau_{\scriptscriptstyle \mathrm{C}}}
\newcommand{\tauw}{\tau_{\scriptscriptstyle \mathrm{W}}}
\newcommand{\taudeall}{\tau_{\scriptscriptstyle \mathrm{DE-All}}}
\newcommand{\taudcm}{\tau_{\scriptscriptstyle \mathrm{DCM}}}
\newcommand{\taudcs}{\tau_{\scriptscriptstyle \mathrm{DCS}}}
\newcommand{\Piall}{\Pi_{\scriptscriptstyle \mathrm{All}}}
\newcommand{\Pidcm}{\Pi_{\scriptscriptstyle \mathrm{DCM}}}
\newcommand{\Pidcs}{\Pi_{\scriptscriptstyle \mathrm{DCS}}}
\newcommand{\tauwl}{\tau_{{\scriptscriptstyle \mathrm{W}}, \ell}}
\newcommand{\tauwlst}{\tau_{{\scriptscriptstyle \mathrm{W}}, {\ell^*}}}
\newcommand{\taucl}{\tau_{{\scriptscriptstyle \mathrm{C}}, \ell}}
\begin{document}
\title{Data-Efficient Quickest Outlying Sequence Detection in Sensor Networks}
 \author{
   \IEEEauthorblockN{
     Taposh Banerjee and Venugopal V. Veeravalli 
     }\\
  \IEEEauthorblockA{
        Department of Electrical \& Computer Engineering, \\
        University of Illinois at Urbana-Champaign, Urbana, USA\\
                              Email: {banerje5,vvv}@illinois.edu 
                    }
    }

\maketitle



\symbolfootnote[0]{\footnotesize
Parts of this paper have been presented at ICASSP 2013. 
 This research was supported in part by the National Science Foundation under 
 grant CCF 08-30169, CCF 11-11342 and DMS 12-22498, through the University of Illinois at 
Urbana-Champaign. This research was also supported in part by the U.S. 
Defense Threat Reduction Agency through subcontract 147755 at the University of Illinois from prime award HDTRA1-10-1-0086.}


\begin{abstract}
A sensor network is considered where at each sensor a sequence of random variables is observed. 
At each time step, 
a processed version of the observations is 
transmitted from the sensors to a common node called the fusion center.
At some unknown point in time the distribution of observations at an unknown subset of the 
sensor nodes changes.
The objective is to detect the outlying sequences as quickly as possible, 
subject to constraints on the 
false alarm rate, the cost of observations taken
at each sensor, and the cost of communication between the sensors and the fusion center. 
Minimax formulations are proposed for the above problem and algorithms
are proposed that are shown to be asymptotically optimal for the proposed 
formulations, as the false alarm rate goes to zero. 
It is also shown, via numerical studies, that the proposed
algorithms perform significantly better than those based on fractional sampling, 
in which the classical algorithms from the literature are used 
and the constraint on the cost of observations is met 
by using the outcome of
a sequence of biased coin tosses, independent of the observation process.
\end{abstract}

\begin{keywords}
Quickest change detection, observation control, minimax, multi-channel systems, outlying sequence detection, asymptotic optimality.
\end{keywords}

\section{Introduction} \label{sec:Intro}
In many engineering applications a sensor network is often deployed to 
observe a phenomenon, and to make 
statistical inference in a distributed and collaborative manner in real time. 
An application of particular interest is the detection of the onset of 
an activity or occurence of an event in/around the object/phenomenon being monitored. 
For example, it is of interest 
to detect the arrival of an animal/bird to its habitat or to detect a sudden 
increase in the stress/strain on the infrastructure being monitored like bridges or 
buildings.  
Other applications include the detection of a bioterrorist attack, the detection of the arrival 
of an intruder in a geographical area, etc. 
The detection of such an event in real time can either be a primary objective 
of the network, or it could be used as a trigger to activate more sophisticated and costly 
monitoring systems or sensors, e.g., a video monitoring system or a human inspection. 
Such detection problems can be modeled under the framework of quickest change detection. 

The problem of quickest change detection (QCD) is well studied in the literature; 
see \cite{poor-hadj-qcd-book-2009}, \cite{tart-niki-bass-2014}, \cite{bass-niki-change-det-book-1993}, 
and \cite{veer-bane-elsevierbook-2013} for a review of QCD. 
In the classical QCD problem there is a single sequence of random variables or vectors. 
Before the change the random vectors have a particular distribution. At some point in time, 
called the change point, the distribution of the vectors changes. The objective is to 
detect this change in distribution as quickly as possible, i.e., with minimum possible delay, 
subject to a constraint on the false alarm rate; see \cite{shew-book-1931}, \cite{girs-rubi-amstat-1952}, 
\cite{page-biometrica-1954}, \cite{shir-siamtpa-1963}, \cite{shir-opt-stop-book-1978}, \cite{lord-amstat-1971}, 
\cite{poll-astat-1985} and \cite{mous-astat-1986}. 
Depending on the availability of the
information on the distribution of the change point,
the QCD problem is either studied in the Bayesian setting of \cite{shir-siamtpa-1963}, 
or in non-Bayesian or minimax settings of \cite{lord-amstat-1971} and \cite{poll-astat-1985}.

In this paper we are interested in the decentralized version of the problem first studied 
in \cite{veer-ieeetit-2001}, and further investigated, for example, in 
 \cite{mei-ieeetit-2005} and \cite{tart-veer-sqa-2008}; 
 see \cite{veer-bane-elsevierbook-2013} and the references therein. 

In the decentralized QCD model there   
is a set of sensors and a central decision maker called the fusion center. 
At each sensor a sequence of random variables is observed over time,  
and at each time step, a processed version
of the observations is transmitted from each sensor to the fusion center.
At the change point the distribution 
of the observations at all the sensor nodes changes. 
The objective in the decentralized model is to find a technique to process the 
observations locally at each sensor, and to find a fusion 
technique to be applied at the fusion center, to detect the change as quickly 
as possible, subject to a constraint on the false alarm rate. 

In modern sensor networks  
there is a cost associated with acquiring observations at each sensor. 
Also, there is a cost associated with the communication between 
the sensors and the fusion center. That is there is a cost associated 
with acquiring data in the system. Thus, the change has to be detected 
in a \textit{data-efficient} way. Moreover, a prior on the change point is often
not available. Also often the change only affects a subset of the sensor nodes, 
and the information on the affected subset and even its size may not be known \textit{a priori}.   
In the QCD literature the latter problem is called a multi-channel QCD problem. 

The QCD problem in a multi-channel setting is studied in \cite{tart-veer-fusion-2002}, \cite{mei-biometrica-2010}
and \cite{mei-isit-2011}. However, in these papers neither the cost of observations at the sensors,
nor the cost of communication between the sensor nodes and the fusion center is taken into account.
In the papers
\cite{veer-ieeetit-2001}, \cite{mei-ieeetit-2005}, \cite{tart-veer-sqa-2008}, and \cite{mei-isit-2011}, 
the cost of communication is controlled by restricting the amount of information transmitted to either one bit or few bits,
but the constraint on the cost of communication is not part of the problem formulation itself.
The QCD problem where the cost of communication is considered explicitly is studied
in \cite{zach-sund-ieeetwc-2008} and \cite{bane-etal-ieeetwc-2011}. However in these papers, the cost
of observations at the sensors is not taken into account. Also the problems are not considered in a multi-channel setting,
i.e., in these papers the change affects all the sensors at the time of change.

In \cite{bane-veer-sqa-2012} and \cite{bane-veer-IT-2013} we extended 
the classical QCD formulations studied in 
\cite{shir-siamtpa-1963}, 
\cite{lord-amstat-1971} and \cite{poll-astat-1985} by putting an additional 
constraint on the cost of observations used in the detection process. 
We proposed problem formulations, for the Bayesian setting in \cite{bane-veer-sqa-2012} 
and for two minimax settings in \cite{bane-veer-IT-2013}, 
where the objective is to minimize some version 
of the average delay, subject to constraints on the false alarm rate and 
a version of the average number of observations taken before the change point.  
For the i.i.d. model we proposed two-threshold extensions of the classical single-threshold   
algorithms, and showed that they are asymptotically optimal for the proposed formulations. 
We also showed via simulations that the two-threshold algorithms we proposed provide 
a significant gain in performance as compared to the approach of fractional sampling, 
in which the constraint on the observation cost is met by skipping samples randomly.  

In \cite{bane-veer-sqa-2014} we extended the results from \cite{bane-veer-IT-2013} 
to sensor networks where the change affects all the sensors. However in the problem formulations 
in \cite{bane-veer-sqa-2014}, the cost of communication between the sensors and the fusion center 
is not taken into account. 

In this paper we extend the results from \cite{bane-veer-IT-2013} 
to a sensor network where the change affects the distribution of observations at an unknown subset of sensors. 
We refer to this problem as the quickest outlying sequence detection problem.   
We propose extensions of the minimax problem formulations from \cite{lord-amstat-1971} and \cite{poll-astat-1985} 
to sensor networks by introducing additional constraints on the cost of observations used 
at each sensors, and the cost of communication between the sensors and the fusion center. 
We propose two algorithms:
the DE-Censor-Max algorithm and the DE-Censor-Sum algorithm. 
Both the algorithms are based on the DE-CuSum algorithm we proposed in \cite{bane-veer-IT-2013}. 
The DE-CuSum algorithm can be used for data-efficient QCD in a single sequence of observations. 
In both the algorithms we propose in this paper, 
the DE-CuSum algorithm is used locally at each sensor; thus ensuring data-efficiency at the sensors.
In both the algorithms the local DE-CuSum statistic is transmitted from the sensors to the fusion center,
if the local DE-CuSum statistic is above a certain threshold; this is censoring.
In the DE-Censor-Max algorithm a change is declared at the fusion center when the \textit{maximum} of the transmitted
DE-CuSum statistics across the streams is above a threshold.
In the DE-Censor-Sum algorithm a change is declared at the fusion center when the \textit{sum} of the transmitted
DE-CuSum statistics across the streams is above a threshold.
We will provide detailed performance analysis of these algorithms. The analysis will reveal that
the DE-Censor-Max algorithm is asymptotically optimal for the problems proposed, when
the change affects \textit{exactly one} stream, as the false alarm rate goes to zero.
Also, using the results in \cite{mei-biometrica-2010}, 
the DE-Censor-Sum algorithm is uniformly asymptotically optimal, 
for each possible post-change scenario, as the false alarm rate goes to zero. 
We will also provide numerical results to compare the performance of the two 
proposed algorithms as a function of the number of outlying streams.

\section{Centralized Minimax Formulations for DE-QCD and the DE-CuSum algorithm}
\label{sec:CENTRAL_FORM}
Since the formulations and algorithm proposed in this paper 
crucially depend on the formulations and the algorithm proposed in \cite{bane-veer-IT-2013},
in this section we provide a detailed overview of the results from \cite{bane-veer-IT-2013}.
In the following, we use $\Prob_n$ to denote the underlying probability measure 
when the change occurs at time $n$, $n\leq \infty$.
We use $\Expect_n$ to denote the corresponding expectation. 
We say $p(\alpha) \sim q(\alpha)$
or $p(\alpha) \leq q(\alpha) (1+o(1))$, as $\alpha \to 0$, to denote
$p(\alpha)/q(\alpha) \to 1$ and $\lim_\alpha p(\alpha)/q(\alpha) \leq 1$,
respectively, as $\alpha \to 0$.
We use $D(f\;||\;g)$ to represent the K-L divergence between the p.d.fs $f$ and $g$.
We assume that the moments of up to third order of all the log likelihood ratios appearing
in this paper are finite and positive.

In \cite{bane-veer-IT-2013} we considered data-efficient quickest change detection 
in a single observation sequence. We considered an observation sequence $\{X_n\}$: $\{X_n\}$ are i.i.d. with
probability density function (p.d.f.) $f_0$
before the change point $\gamma$, and are i.i.d. with p.d.f. $f_1$ after the change point $\gamma$.
A decision maker observes the random variables $\{X_n\}$ over time and has to detect this change 
in distribution as quickly as possible, subject to constraints on the false alarm rate and 
the fraction of time observations are taken before change. We now describe 
the type of policies we consider for this problem.  

Let $S_n$ be the indicator random variable such that $S_n=1$ if $X_n$ is used for decision making,
and $S_n=0$ otherwise. Let
\[
\mathcal{I}_n = \left[ S_1, \ldots, S_n, X_1^{(S_1)}, \ldots, X_n^{(S_n)} \right],
\]
represent the information at time $n$.
Here, $X_i^{(S_i)}$ represents $X_i$ if $S_i=1$, otherwise $X_i$ is absent from the information vector $\mathcal{I}_n$.
Let $\tau$ be a stopping time on the information sequence $\{\mathcal{I}_n\}$, that is,
$\indic_{\{\tau=n\}}$ is a measurable function of $\mathcal{I}_n$. Here, $\indic_F$ represents the indicator of the event $F$.
For time $n\geq1$, based on the information vector $\mathcal{I}_n$, a decision is made whether to \textit{stop and declare change} 
($\tau=n$) or \textit{to continue taking observations} ($\tau > n$). 
If the decision is to \textit{continue}, a decision is made as to whether to \textit{take} or \textit{skip} the observation at time $n+1$. Thus, $S_{n+1}$ is a function of the information available at time $n$, i.e.,
\[
S_{n+1} = \phi_n(\mathcal{I}_n),
\]
where, $\phi_n$ is the control law at time $n$. 
The decision on whether or not to take the first observation is taken without observing $\{X_n\}$. 
In the absence of a prior information on the distribution of $\gamma$, $S_1$ is typically set to $1$, that is
the first observation is always taken.
A policy for data-efficient QCD is 
\[
\Psi = \{\tau, \phi_0, \ldots, \phi_{\tau-1} \}.
\]

To capture the cost of observations used before $\gamma$, we proposed a new metric for data-efficiency in minimax settings,  
the Pre-change Duty Cycle ($\PDC$). Here we consider its variant that we studied in \cite{bane-veer-sqa-2014}: 
\begin{equation}
\label{eq:PDC}
\PDC(\Psi) = \limsup_{\gamma \to \infty}  \frac{1}{\gamma} \Expect_\gamma \left[\sum_{k=1}^{\gamma-1} S_k\right].
\end{equation}
We note that $\PDC\leq 1$. If in a policy all the samples are taken, then the $\PDC$ for that policy is $1$. 
If every other sample is skipped, then the $\PDC$ 
for that policy is $0.5$.

For delay and false alarm we considered the metrics used in \cite{lord-amstat-1971}: the Worst case Average Detection Delay ($\WADD$)
\begin{equation}\label{eq:WADD_Def}
\WADD(\Psi) = \sup_{\gamma\geq 1}  \text{ess sup}\; \Expect_\gamma \left[ (\tau-\gamma)^+ | \mathcal{I}_{\gamma-1} \right],
\end{equation}
and the False Alarm Rate ($\FAR$)
\begin{equation}\label{eq:FAR_Def}
\FAR(\Psi) = 1/\Expect_\infty \left[ \tau\right].
\end{equation}
We considered the following data-efficient minimax formulation. 
\begin{problem}\label{prob:DELorden}
\begin{eqnarray}
\label{eq:MinimaxProblemLorden}
\underset{\Psi}{\text{minimize}} && \WADD(\Psi),\nonumber \\\vspace{-0.15cm}
\text{subject to } && \FAR(\Psi) \leq \alpha, \\
\text{  and  } &&\PDC(\Psi) \leq \beta. \nonumber
\end{eqnarray}
Here $0 \leq \alpha, \beta \leq 1$ are the given constraints.
\end{problem}
We also studied the data-efficient minimax formulation
where instead of $\WADD$, the following Conditional Average Detection Delay ($\CADD$) metric from 
\cite{poll-astat-1985} is used: 
\begin{equation}\label{eq:CADD_Def}
\CADD(\Psi) = \sup_{\gamma \geq 1} \ \ \Expect_\gamma \left[ \tau-\gamma | \tau \geq \gamma \right].
\end{equation}
\begin{problem}\label{prob:DEPollak}
\begin{eqnarray}
\label{eq:MinimaxProblemPollak}
\underset{\Psi}{\text{minimize}} && \CADD(\Psi),\nonumber \\\vspace{-0.15cm}
\text{subject to } && \FAR(\Psi) \leq \alpha, \\
\text{  and  } &&\PDC(\Psi) \leq \beta. \nonumber
\end{eqnarray}
Here $0 \leq \alpha, \beta \leq 1$ are given constraints.
\end{problem}

We then proposed an algorithm, that we called the DE-CuSum algorithm, and showed that it is asymptotically 
optimal for both the above problems, for each fixed $\beta$, as $\alpha\to 0$. The DE-CuSum algorithm is defined below. 
\begin{algorithm}[$\text{DE-CuSum}$]
\label{algo:DECuSum}
Start with $W_0=0$ and fix $\mu>0$, $A>0$ and $h\geq0$. For $n\geq 0$ use the following control:
\begin{enumerate}
\item Take the first observation.
\item If an observaton is taken then update the statistic using 
\[ W_{n+1} = (W_n + \log [f_1(X_{n+1})/f_0(X_{n+1})])^{h+},\]
where $(x)^{h+} = \max\{x, -h\}$.
\item If $W_n < 0$, skip the next observation and update the statistic using 
\[
W_{n+1} = \min\{W_n + \mu, 0\}.
\]
\item Declare change at
\[\tauw = \inf\left\{ n\geq 1: W_n > A \right\}.\]
\end{enumerate}
\end{algorithm}
If $h=0$, the DE-CuSum statistic $W_n$ never becomes negative and hence
reduces to the CuSum statistic \cite{page-biometrica-1954} and evolves as: $C_0=0$, and for $n\geq 0$,\vspace{-.15cm}
\[
C_{n+1} = \max\{0, C_n + \log [f_1(X_{n+1})/f_0(X_{n+1})]\}.
\]
%
The evolution of the DE-CuSum algorithm and the CuSum algorithm for a given sequence of observations is plotted in Fig.~\ref{fig:DECuSum_evolution}.
If $h=\infty$, the DE-CuSum statistic evolves as follows. Initially the DE-CuSum statistic
evolves according to the CuSum statistic till the statistic $W_n$ goes below $0$.
Once the statistic goes below $0$, samples are skipped depending
on the undershoot of $W_n$ (this is also the sum of the log likelihood ratio of the observations) and the design parameter $\mu$.
Specifically, the statistic is incremented by $\mu$ at each time step, and
samples are skipped till $W_n$ goes above zero,
at which time it is reset to zero. At this point, fresh observations are taken and the
process is repeated till the statistic crosses the threshold $A$, at which time a change is declared.
The parameter $\mu$ is a substitute for the Bayesian prior $\rho$
that is used in the DE-Shiryaev algorithm described in \cite{bane-veer-sqa-2012},
and is chosen to meet the constraint on the $\PDC$. 
Thus, the DE-CuSum algorithm is a sequence of SPRTs (\cite{wald-wolf-amstat-1948},
\cite{sieg-seq-anal-book-1985}) intercepted by ``sleep'' times controlled by the undershoot
and the parameter $\mu$. If $h < \infty$, the number of consecutive samples skipped is bounded by $h/\mu + 1$.
\begin{figure}[htb]
\center
\includegraphics[width=9cm, height=5cm]{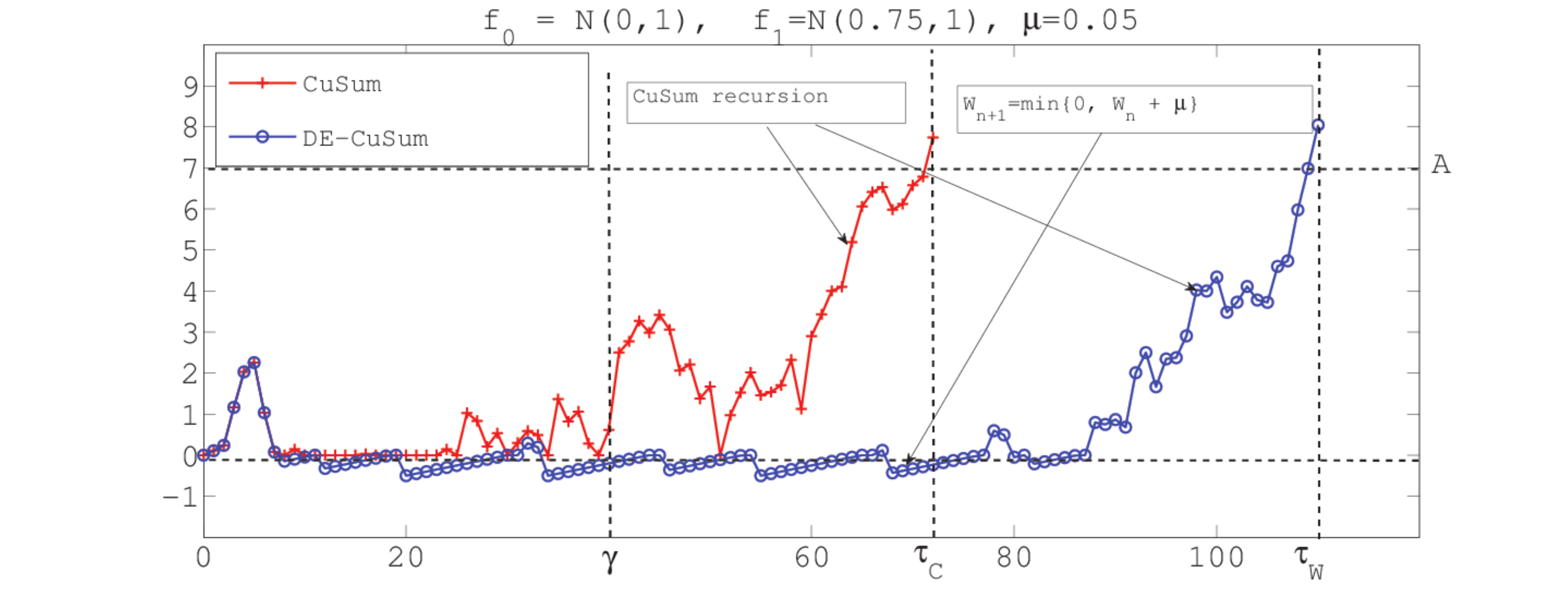}
\caption{{Typical evolution of the CuSum statistic and the DE-CuSum statistic evaluated using 
the same set of observations. Note that the CuSum statistic is always greater than the DE-CuSum statistic. }}
\label{fig:DECuSum_evolution}
\end{figure}

Let $\tauc$ represent the stopping time for the CuSum algorithm. 
It is well known that the CuSum algorithm is asymptotically optimal for both Problem~\ref{prob:DELorden} 
and Problem~\ref{prob:DEPollak}, for $\beta=1$, as $\alpha \to 0$; see \cite{lord-amstat-1971}, \cite{lai-ieeetit-1998} and 
\cite{tart-poll-polu-tpa-2011}. However, since all the observations are taken in the CuSum algorithm 
\[
\PDC(\tauc)=1.
\]
Thus, the CuSum algorithm is not asymptotically optimal for Problem~\ref{prob:DELorden} 
and Problem~\ref{prob:DEPollak}, if $\beta < 1$. 

We proved the following optimality result for the DE-CuSum algorithm 
in \cite{bane-veer-IT-2013}; also see \cite{bane-veer-sqa-2014}. 
Let 
\[
\tau_{-}= \inf\left\{n \geq 1: \sum_{k=1}^n \log \frac{f_1(X_k)}{f_0(X_k)} < 0\right\},
\]
be the ladder variable. Then $W_{\tau_{-}}$ is the ladder height; see \cite{wood-nonlin-ren-th-book-1982} and \cite{sieg-seq-anal-book-1985}. 
Recall that $(x)^{h+} = \max\{x, -h\}$.

\begin{theorem}[\cite{bane-veer-IT-2013}, \cite{bane-veer-sqa-2014}]
\label{thm:DECuSumOpt}
Let $\Expect_1[\log [f_{1}(X_{1})/f_{0}(X_{1})]]$ and $\Expect_\infty[\log [f_{0}(X_{1})/f_{1}(X_{1})]]$ be finite and positive.
If $\mu > 0$, $h < \infty$, and $A=|\log \alpha|$, we have
\begin{equation}
\label{eq:DECuSumPerf}
\begin{split}
C_n &\geq W_n\; \ \ \ \; \forall n \geq 0,\\
\FAR(\tauw) &\leq \FAR(\tauc) \leq \alpha,\\
\PDC(\tauw) &=\frac{\Expect_\infty[\tau_{-}]}{\Expect_\infty[\tau_{-}] + \Expect_\infty[\lceil | W_{\tau_{-}}^{h+}|/\mu \rceil]},\\
\CADD(\tauw) &\leq  \CADD(\tauc) + K,\\
\WADD(\tauw) &\leq \WADD(\tauc) + K.
\end{split}
\end{equation}
In the above equation $K$ is a positive constant that is a function of $\mu$, $h$, and the pre- and post-change
distributions, but is not a function of the threshold $A$. 
If $h=\infty$, then
\begin{equation}
\label{eq:PDCApprox}
\PDC(\tauw) \leq \frac{\mu}{\mu+D( f_0 \; ||\; f_1)}.
\end{equation}
\end{theorem}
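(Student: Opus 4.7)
The plan is to prove the five claims of the theorem in the order listed, since the sample-path comparison $C_n \ge W_n$ directly drives the FAR bound, while the PDC formula and the delay bounds use separate renewal-theoretic and post-change coupling arguments.

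I would first establish $C_n \ge W_n$ for all $n \ge 0$ by induction on $n$, conditioning on the sign of $W_n$. The base case $C_0 = W_0 = 0$ is trivial. Writing $\ell_{n+1} = \log[f_1(X_{n+1})/f_0(X_{n+1})]$: if $W_n \ge 0$, a sample is taken, and by the inductive hypothesis $C_n + \ell_{n+1} \ge W_n + \ell_{n+1}$, so $C_{n+1} = \max\{0, C_n + \ell_{n+1}\} \ge \max\{-h, W_n + \ell_{n+1}\} = W_{n+1}$ because $0 \ge -h$ and $\max$ is monotone. If $W_n < 0$, no sample is taken, and $W_{n+1} = \min\{W_n + \mu, 0\} \le 0 \le C_{n+1}$. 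The FAR inequality then follows because $\{W_n > A\} \subseteq \{C_n > A\}$ forces $\tauw \ge \tauc$ almost surely, and $\FAR(\tauc) \le \alpha$ at $A = |\log \alpha|$ is the standard CuSum threshold bound.

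For the PDC formula I would exploit the regenerative structure of $\{W_n\}$ under $\Prob_\infty$. Starting from $W_0 = 0$, observations are drawn and $W_n$ tracks the log-likelihood random walk (truncated from below at $-h$) until the first descending ladder time $\tau_-$, at which the statistic equals $W_{\tau_-}^{h+}$; a deterministic skipping phase of length $\lceil |W_{\tau_-}^{h+}|/\mu \rceil$ then brings the statistic back to $0$, closing a renewal cycle in which exactly $\tau_-$ samples were drawn. The elementary renewal-reward theorem applied with reward ``samples per cycle'' and cycle length $\tau_- + \lceil |W_{\tau_-}^{h+}|/\mu \rceil$ delivers the stated PDC expression. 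When $h = \infty$, Wald's identity applied to the random walk with step mean $-D(f_0 \| f_1)$ gives $\Expect_\infty[W_{\tau_-}] = -D(f_0 \| f_1)\, \Expect_\infty[\tau_-]$, whence $\Expect_\infty[\lceil |W_{\tau_-}|/\mu \rceil] \ge D(f_0 \| f_1)\, \Expect_\infty[\tau_-]/\mu$, and substitution into the PDC formula yields the upper bound $\mu/(\mu + D(f_0 \| f_1))$.

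The main obstacle is the delay inequality $\WADD(\tauw) \le \WADD(\tauc) + K$ (and its $\CADD$ analogue). The difficulty is that the pre-change history $\Ic_{\gamma-1}$ can place $W_{\gamma-1}$ in a sleep state as deep as $-h$, whereas the CuSum counterpart starts fresh. I would argue that, for $h < \infty$, the statistic deterministically returns to $0$ within at most $\lceil h/\mu \rceil$ further steps regardless of the post-change samples, after which $\{W_n\}$ resumes CuSum-type dynamics on fresh observations. Each subsequent sleep excursion has duration bounded by $\lceil h/\mu \rceil + 1$, and the positive post-change drift $\Expect_1[\ell_1] > 0$ combined with a Wald-type bound on cumulative negative excursions of a drifted random walk guarantees that the expected number of such excursions before $\{W_n\}$ crosses $A$ is bounded. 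Assembling these estimates yields a threshold-independent constant $K = K(\mu, h, f_0, f_1)$ that uniformly bounds the extra conditional expected delay over all pre-change histories, giving the $\WADD$ bound; the $\CADD$ bound follows by the same post-change coupling applied to the supremum over $\gamma$ of the unconditional expected delay. The crucial technical point is ensuring that $K$ does not depend on $A$, which rests on a uniform bound on total post-change skipping contribution independent of the threshold.
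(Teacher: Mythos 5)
Your proposal is correct and follows essentially the same route as the proof the paper relies on: Theorem~\ref{thm:DECuSumOpt} is imported from \cite{bane-veer-IT-2013} and \cite{bane-veer-sqa-2014} rather than reproved here, and the ingredients the paper alludes to are exactly yours --- induction for $C_n \geq W_n$ and hence the $\FAR$ bound, the on--off renewal-reward computation for the $\PDC$ (with Wald's identity yielding \eqref{eq:PDCApprox} when $h=\infty$), and the resetting bound $\Expect_1[\tauw(x,A)] \leq \Expect_1[\tauw(0,A)] + \lceil h/\mu\rceil$ combined with a threshold-independent count of sub-zero excursions for the delay. The only step worth making precise is why that excursion count is bounded independently of $A$: the clean argument is that after each excursion the statistic restarts from $0$, and a positively drifted random walk started at $0$ stays nonnegative forever with some probability $p_0>0$, so the number of excursions is stochastically dominated by a geometric variable with mean $1/p_0$ not depending on the threshold.
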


Thus, for any fixed threshold, the $\FAR$ of the DE-CuSum algorithm is smaller 
than that of the CuSum algorithm. Thus, we can use the same threshold $A=|\log \alpha|$,
as used for the CuSum algorithm, to satisfy the $\FAR$ constraint of $\alpha$. 
From \cite{lord-amstat-1971} and \cite{lai-ieeetit-1998} it is well known that
\begin{equation}\label{eq:LB_Lai_SingleSensor}
\begin{split}
\inf_{\tau: \FAR(\tau) \leq \alpha} \CADD(\tau) &\sim \CADD(\tauc) \\ 
&\sim \frac{|\log \alpha|}{D(f_1 \; ||\; f_0)}, \mbox{ as } \alpha \to 0. 
\end{split}
\end{equation}
Thus, $\frac{|\log \alpha|}{D(f_1 \; ||\; f_0)}$ is an asymptotic lower 
bound on the $\CADD$ of any stopping time satisfying an $\FAR$ constraint of $\alpha$. 
Because of the above theorem, the $\CADD$ of the DE-CuSum algorithm also achieves this lower bound, 
for each fixed $\mu$ and $h$ (because the delays of the two algorithms are within a constant of each other). 
Also, since 
the expression for the $\PDC$ is not a function of $A$, 
there exists choice of $\mu$ and $h$ such that
the $\PDC$ constraint of $\beta$ can be satisfied independent of the choice of threshold $A$. 
Hence, the DE-CuSum algorithm is asymptotic optimal, 
for both Problem~\ref{prob:DELorden} and Problem~\ref{prob:DEPollak}, for each fixed $\beta$, as $\alpha \to 0$.

To design the DE-CuSum algorithm to achieve a smaller value of $\PDC$, that is to design the 
algorithm to drop a larger fraction of samples before change, one has to select a smaller value for 
the parameter $\mu$. We remark that the assumption that $h< \infty$ is crucial to the proof of the above 
theorem. In \cite{bane-veer-IT-2013} we also showed via simulations that the DE-CuSum algorithm 
provides a significant gain in performance as compared to the approach of fractional sampling, 
where the CuSum algorithm is used and the $\PDC$ constraint is met by skipping samples randomly, 
independent of the observation process.

\section{Problem Formulation for Outlying Sequence Detection in Sensor Networks}
\label{sec:DistForm}
We have a sensor network consisting of $L$ sensors and a central decision
maker called the fusion center.
The sensors are indexed by the index $\ell \in \{1, \cdots, L\}$.
At sensor $\ell$ the sequence $\{X_{n,\ell}\}_{n\geq 1}$ is observed, where $n$ is
the time index.
At $\gamma$, the distribution of $\{X_{n,\ell}\}$ in a subset
$\kappa = \{k_1, k_2, \cdots, k_m\} \subset \{1,2,\cdots,L\}$ of the sensor nodes changes,
from $f_{0,\ell}$ to say $f_{1,\ell}$.
The random variables $\{X_{n,\ell}\}$ are independent across indices $n$ and $\ell$
conditioned on $\gamma$ and the affected subset $\kappa$.
The distributions $f_{0,\ell}$ and $f_{1,\ell}$ are assumed to be known, but
neither the affected subset $\kappa$ nor its size $m$ is known.

Let $S_{n,\ell}$ be the indicator random variable such that
\[
S_{n,\ell} = \begin{cases}
1 & \text{~if~} X_{n,\ell} \text{ is used for decision making at sensor $\ell$}\\
0 & \text{ otherwise}.
\end{cases}
\]

Let $\phi_{n, \ell}$ be the observation control law at sensor $\ell$, i.e.,
\begin{equation*}
S_{n+1,\ell} = \phi_{n, \ell}(\mathcal{I}_{n,\ell}),
\end{equation*}
where
$\mathcal{I}_{n,\ell} = \left[ S_{1,\ell}, \ldots, S_{n,\ell}, X_{1,\ell}^{(S_{1,\ell})}, \ldots, X_{n,\ell}^{(S_{n,\ell})} \right]$.
Here, $X_{n,\ell}^{(S_{n,\ell})} = X_{1,\ell}$ if $S_{1,\ell}=1$, otherwise $X_{1,\ell}$ is absent from the
information vector $\mathcal{I}_{n,\ell} $.
Thus, the decision to take or skip a sample at sensor $\ell$ is based on its past
information.
Let
\[
\boldsymbol{\mathcal{I}}_n=\{\mathcal{I}_{n, 1}, \cdots, \mathcal{I}_{n, L}\}
\]
be the information available at time $n$ across the sensor network.

Also let
\begin{equation*}
Y_{n,\ell} = g_{n,\ell}(\mathcal{I}_{n,\ell})
\end{equation*}
be the information transmitted from sensor $\ell$ to the fusion center. If no information
is transmitted to the fusion center, then $Y_{n,\ell}=\text{NULL}$, which is treated as zero
at the fusion center. Here,
$g_{n,\ell}$ is the transmission control law at sensor $\ell$.
Let
\[\boldsymbol{Y}_n = \{Y_{n,1}, \cdots, Y_{n,L}\}\]
be the information received at the fusion center at time $n$,
and let $\tau$ be a
stopping time on the sequence $\{\boldsymbol{Y}_n\}$.

Let
\[\boldsymbol{\phi_n}=\{\phi_{n, 1}, \cdots, \phi_{n, L}\}\]
denote the observation control law at time $n$, and let
\[\boldsymbol{g}_n=\{g_{n, 1}, \cdots, g_{n, L}\}\]
denote the transmission control law at time $n$.
For data-efficient QCD in sensor networks
we consider the policy of type $\Pi$
defined as
\[\Pi = \{\tau, \{\boldsymbol{\phi}_0, \cdots, \boldsymbol{\phi}_{\tau-1}\}, \{\boldsymbol{g}_1, \cdots, \boldsymbol{g}_\tau\}\}.\]

The $\PDC_\ell$, the $\PDC$ for sensor $\ell$, is defined as
\begin{equation}
\PDC_\ell(\Pi) = \limsup_{\gamma \to \infty}  \frac{1}{\gamma} \Expect_\infty \left[\sum_{k=1}^{\gamma-1} S_{k,\ell}\right].
\end{equation}
Thus, $\PDC_\ell$ is the fraction of time observations are taken before change at sensor $\ell$.

To capture the cost of communication between each sensor and the fusion center before change,
we propose the Pre-Change Transmission Cost ($\PTC$) metric. We
define
\[
T_{n,\ell} = \begin{cases}
1 & \text{~if~} Y_{n,\ell} \neq \text{NULL, i.e, some information }\\
& \text{ is transmitted to the fusion center }\\
0 & \text{ otherwise}.
\end{cases}
\]
The Pre-change Transmission Cost at sensor $\ell$ ($\PTC_\ell$) is defined as
\begin{equation}
\label{eq:PTC_l}
\PTC_\ell(\Pi) = \limsup_{\gamma \to \infty}  \frac{1}{\gamma} \Expect_\infty \left[\sum_{k=1}^{\gamma-1} T_{k,\ell}\right].
\end{equation}
If in a policy every sample is taken and some information is transmitted at every time slot at all
the sensors, then for that policy $\PDC_\ell=\PTC_\ell=1$, $\forall \ell$.
If transmissions happen from the sensors only in every
alternate time slots, then $\PTC_\ell=0.5$, $\forall \ell$.

The objective here is to solve the following extensions of Problem~\ref{prob:DELorden} and
Problem~\ref{prob:DEPollak}: 
\medskip
\begin{problem}\label{prob:MultichannelLorden}\vspace{-0.1cm}
\begin{eqnarray}
\label{eq:MultichannelProblemLorden}
\underset{\Pi}{\text{minimize}} && \WADD(\Pi),\nonumber \\\vspace{-0.1cm}
\text{subject to } && \FAR(\Pi) \leq \alpha, \\
\text{           } &&\PDC_\ell(\Pi) \leq \beta_\ell, \; \mbox{ for } \; \ell=1,\cdots,L, \nonumber\\
\text{  and      } &&\PTC_\ell(\Pi) \leq \sigma_\ell, \; \mbox{ for } \; \ell=1,\cdots,L, \nonumber
\end{eqnarray}
where $0 \leq \alpha, \beta_\ell, \sigma_\ell \leq 1$, for $\ell=1,\cdots,L$, are given constraints, and
\end{problem}
\begin{problem}\label{prob:MultichannelPollak}
\begin{eqnarray}
\label{eq:MultichannelProblemPollak}
\underset{\Pi}{\text{minimize}} && \CADD(\Pi),\nonumber \\\vspace{-0.1cm}
\text{subject to } && \FAR(\Pi) \leq \alpha, \\
\text{           } &&\PDC_\ell(\Pi) \leq \beta_\ell, \; \mbox{ for } \; \ell=1,\cdots,L, \nonumber\\
\text{  and      } &&\PTC_\ell(\Pi) \leq \sigma_\ell, \; \mbox{ for } \; \ell=1,\cdots,L, \nonumber
\end{eqnarray}
where $0 \leq \alpha, \beta_\ell, \sigma_\ell \leq 1$, for $\ell=1,\cdots,L$, are given constraints.
\end{problem}

The asymptotic lower bound developed in \cite{lai-ieeetit-1998} can be specialized to the sensor network setting considered.
Let
\[\Delta_\alpha = \{\Pi: \FAR(\Pi) \leq \alpha\}.\]
\begin{theorem}[\cite{lai-ieeetit-1998}]\label{thm:Multichannel_LB}
If the outlying subset post-change is $\kappa=\{k_1,k_2,\cdots,k_m\}$, then as $\alpha \to 0$,
\begin{equation}\label{eq:MC_LB_CADD}
\inf_{\Pi \in \Delta_\alpha} \CADD(\Pi) \geq \frac{|\log \alpha|}{\sum_{i=1}^m D(f_{1,k_i} \; || \; f_{0,k_i})} (1+o(1)).
\end{equation}
\end{theorem}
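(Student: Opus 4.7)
The plan is to invoke the universal asymptotic lower bound of Lai~\cite{lai-ieeetit-1998} and to verify that his change-of-measure argument extends to the data-efficient sensor-network setting. Since $\Delta_\alpha$ already consists of \emph{every} policy with $\FAR(\Pi) \leq \alpha$, imposing the further constraints $\PDC_\ell \leq \beta_\ell$ and $\PTC_\ell \leq \sigma_\ell$ only shrinks the admissible set and cannot lower the infimum; it therefore suffices to prove the bound over policies that are free to sample and transmit at every slot. Moreover, because the pre- and post-change distributions agree on $\{1,\dots,L\}\setminus\kappa$, the sensors outside $\kappa$ contribute nothing to distinguishing $\Prob_\gamma$ from $\Prob_\infty$ and can be ignored in the derivation.

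Set $I = \sum_{i=1}^m D(f_{1,k_i}\,\|\,f_{0,k_i})$, fix $\vae > 0$, and let $m_\alpha = \lfloor (1-\vae)|\log\alpha|/I \rfloor$. For any $\Pi\in\Delta_\alpha$, the relevant log-likelihood-ratio process is
\[
\Lambda_n^{(\gamma)} \;=\; \sum_{k=\gamma+1}^{\gamma+n}\sum_{\ell\in\kappa} S_{k,\ell}\,\log\frac{f_{1,\ell}(X_{k,\ell})}{f_{0,\ell}(X_{k,\ell})}.
\]
Because $S_{k,\ell}$ is $\boldsymbol{\mathcal{I}}_{k-1}$-measurable, it is determined before $X_{k,\ell}$ is drawn; the usual likelihood-ratio factorization is therefore preserved and each summand has $\Prob_\gamma$-conditional mean $S_{k,\ell}D(f_{1,\ell}\|f_{0,\ell})$. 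Summing over $\ell\in\kappa$ shows that the drift of $\Lambda_n^{(\gamma)}$ is at most $I$ per step, so a standard maximal inequality, combined with the third-moment hypothesis imposed in Section~\ref{sec:CENTRAL_FORM}, gives $\Prob_\gamma(\max_{n\le m_\alpha}\Lambda_n^{(\gamma)}\ge (1+\vae/2)Im_\alpha)\to 0$ as $\alpha \to 0$. Using the change-of-measure identity $\Prob_\gamma(A)=\Expect_\infty[\exp(\Lambda_{\tau-\gamma}^{(\gamma)})\,\indic_A]$ on events $A\subseteq\{\gamma\le \tau<\gamma+m_\alpha\}$ yields
\[
\Prob_\gamma(\gamma\le\tau<\gamma+m_\alpha) \;\leq\; e^{(1+\vae/2)Im_\alpha}\,\Prob_\infty(\gamma\le\tau<\gamma+m_\alpha) + o(1).
\]
A covering argument over a geometric mesh of change points inside the $\Prob_\infty$-bulk of $\tau$, combined with $\Expect_\infty[\tau]\ge 1/\alpha$, forces some $\gamma_\alpha$ to satisfy $\Prob_\infty(\gamma_\alpha\le\tau<\gamma_\alpha+m_\alpha)=O(\alpha m_\alpha)$. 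Since $e^{(1+\vae/2)Im_\alpha}\alpha m_\alpha=o(1)$ for $m_\alpha\le(1-\vae)|\log\alpha|/I$, this gives $\Prob_{\gamma_\alpha}(\tau<\gamma_\alpha+m_\alpha\mid\tau\ge\gamma_\alpha)\to 0$, hence $\CADD(\Pi)\ge m_\alpha(1+o(1))$, and letting $\vae\downarrow 0$ delivers~\eqref{eq:MC_LB_CADD}.

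The main obstacle I anticipate is the adapted and data-dependent nature of the observation and transmission controls: one has to verify carefully that conditioning on the full history preserves both the likelihood-ratio factorization and the per-step drift bound of $I$, and in particular that skipping samples never \emph{inflates} the drift. The fact that $S_{k,\ell}$ is measurable with respect to pre-$k$ information resolves both concerns simultaneously, and also makes the transmission law $g_{n,\ell}$ irrelevant to the lower bound, since the fusion center's information is a deterministic function of the sensor-level information. Once these measurability points are made precise, the Lai-type argument can be imported essentially verbatim.
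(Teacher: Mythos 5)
Your proposal is correct and follows the same route as the paper, which states this theorem without proof as a direct specialization of the lower bound of Lai in \cite{lai-ieeetit-1998}. Your reconstruction of the change-of-measure argument --- in particular the observation that the predictability of $S_{k,\ell}$ preserves the likelihood-ratio factorization and caps the per-step drift at $\sum_{i=1}^m D(f_{1,k_i} \; || \; f_{0,k_i})$, so that observation and transmission control cannot improve on the fully-observed lower bound --- is precisely the verification the paper leaves implicit.
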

\medskip
Since $\WADD(\Pi) \geq \CADD(\Pi)$, we also have as $\alpha \to 0$,
\begin{equation}\label{eq:MC_LB_WADD}
\inf_{\Pi \in \Delta_\alpha} \WADD(\Pi) \geq \frac{|\log \alpha|}{\sum_{i=1}^m D(f_{1,k_i} \; || \; f_{0,k_i})} (1+o(1)).
\end{equation}

\section{Data-Efficient Algorithms for Outlying Sequence Detection in Sensor Networks}
\label{sec:Algorithms}
In this section we propose two algorithms that can be used to detect the outlying sequences in a data-efficient
way in a sensor network. In both the algorithms the DE-CuSum algorithm (Algorithm~\ref{algo:DECuSum})
is used locally at each sensor. In the rest of the paper we use $W_{n,\ell}$ to denote the 
DE-CuSum statistic at sensor $\ell$. 

\subsection{The DE-Censor-Max Algorithm}
\label{sec:DECENSORMAX_algo}
In the DE-Censor-Max algorithm,
the DE-CuSum algorithm is used at each sensor $\ell$.
If the DE-CuSum statistic $W_{n,\ell}$ at a sensor is above a threshold $D_\ell$,
then the statistic is transmitted to the fusion
center. A change is declared at the fusion
center, if the \text{maximum} of the transmitted statistics from all the sensors is
larger than another
threshold $A$.
Mathematically, the DE-Censor-Max algorithm is described as follows.

\begin{algorithm}[$\text{DE-Censor-Max}$: $\Pidcm$]
\label{algo:DE-MAX}
Start with $W_{0,\ell}=0$, $\forall \ell$. Fix $\mu_\ell > 0$, $h_\ell\geq0$, $D_\ell \geq 0$ and  $A \geq 0$.
For $n\geq 0$ use the following control:
\begin{enumerate}
\item Use the DE-CuSum algorithm at each sensor $\ell$, i.e., update the statistics $\{W_{n,\ell}\}_{\ell=1}^L$
for $n\geq 1$ using
\begin{equation*}
\begin{split}
S_{n+1, \ell} &= 1 \text{~only if~} W_{n,\ell} \geq 0 \\
W_{n+1,\ell} &=
\begin{cases}
\min\{W_{n,\ell} + \mu_\ell, 0\} & \hspace{-0.3cm}\text{ if } S_{n+1,\ell} = 0\\
\left(W_{n,\ell} +\log \frac{f_{1,\ell}(X_{n+1,\ell})}{f_{0,\ell}(X_{n+1,\ell})}\right)^{h+} & \hspace{-0.3cm}\text{ if } S_{n+1,\ell} = 1,
\end{cases}
\end{split}
\end{equation*}
where $(x)^{h+} = \max\{x, -h\}$.
\item Transmit
\[
Y_{n,\ell} = W_{n,\ell} \indic_{\{W_{n,\ell} > D_\ell\}}.
\]
\item At the fusion center stop at
\[
\taudcm = \inf\{n \geq 1: \max_{ \ell \in \{1, \cdots, L\}} Y_{n,\ell}> A\}.
\]
\end{enumerate}
\end{algorithm}

With $D_\ell=0$ and $h_\ell=0$, $\forall \ell$, the DE-CuSum algorithm
at each sensor reduces to the $\mathrm{CuSum}$
algorithm, and $Y_{n,\ell} = W_{n,\ell}$ $\forall n,\ell$.
In this case, the DE-Censor-Max algorithm reduces to the MAX
algorithm proposed in \cite{tart-veer-fusion-2002}.

We will show in the Section~\ref{sec:DEMAX_opt} that when exactly one of the $L$ sensor is affected post-change,
then this algorithm is uniformly asymptotically optimal for both Problem~\ref{prob:MultichannelLorden}
and Problem~\ref{prob:MultichannelPollak} (achieves the lower bound provided in Theorem~\ref{thm:Multichannel_LB} for
each $\kappa$),
for each fixed $\{\beta_\ell\}$ and $\{\sigma_\ell\}$, as $\alpha \to 0$.

\subsection{The DE-Censor-Sum Algorithm}
\label{sec:DECENSORSUM_algo}
Although the DE-Censor-Max algorithm is asymptotically optimal, we will show in Section~\ref{sec:DEMultiNumerical}
that it performs poorly when the size of the outlying subset is large. To address this deficiency, we
propose the DE-Censor-Sum algorithm.

In the DE-Censor-Sum algorithm,
the DE-CuSum algorithm is used at each sensor.
If the DE-CuSum statistic at a sensor is above a threshold,
then the statistic is transmitted to the fusion
center. A change is declared at the fusion
center, if the \text{sum} of the transmitted statistics from all the sensors is
larger than another
threshold. 
Mathematically, the DE-Censor-Sum algorithm is described as follows.

\begin{algorithm}[$\text{DE-Censor-Sum}$: $\Pidcs$]
\label{algo:DE-SUM}
Start with $W_{0,\ell}=0$, $\forall \ell$. Fix $\mu_\ell > 0$, $h_\ell\geq0$, $D_\ell \geq 0$ and  $A \geq 0$.
For $n\geq 0$ use the following control:
\begin{enumerate}
\item Use the DE-CuSum algorithm at each sensor $\ell$, i.e., update the statistics $\{W_{n,\ell}\}_{\ell=1}^L$
for $n\geq 1$ using
\begin{equation*}
\begin{split}
S_{n+1, \ell} &= 1 \text{~only if~} W_{n,\ell} \geq 0 \\
W_{n+1,\ell} &=
\begin{cases}
\min\{W_{n,\ell} + \mu_\ell, 0\} & \hspace{-0.3cm}\text{ if } S_{n+1,\ell} = 0\\
\left(W_{n,\ell} +\log \frac{f_{1,\ell}(X_{n+1,\ell})}{f_{0,\ell}(X_{n+1,\ell})}\right)^{h+} & \hspace{-0.3cm}\text{ if } S_{n+1,\ell} = 1,
\end{cases}
\end{split}
\end{equation*}
where $(x)^{h+} = \max\{x, -h\}$.
\item Transmit
\[
Y_{n,\ell} = W_{n,\ell} \indic_{\{W_{n,\ell} > D_\ell\}}.
\]
\item At the fusion center stop at
\[
\taudcs = \inf\{n \geq 1: \sum_{ \ell \in \{1, \cdots, L\}} Y_{n,\ell}> A\}.
\]
\end{enumerate}
\end{algorithm}

With $D_\ell=0$ and $h_\ell=0$, $\forall \ell$, the DE-CuSum algorithm
at each sensor reduces to the CuSum
algorithm, and $Y_{n,\ell} = W_{n,\ell}$ $\forall n,\ell$.
In this case, the DE-Censor-Sum algorithm reduces to the $N_{sum}$
algorithm proposed in \cite{mei-biometrica-2010}.
If $h_\ell=0$ $\forall \ell$ and $D>0$,
the DE-Censor-Sum algorithm reduces to the $N_{hard}$
algorithm proposed in \cite{mei-isit-2011}.
The DE-Censor-Sum algorithm can easily be modified to obtain data-efficient
extensions of other algorithms proposed in \cite{mei-isit-2011}.

We will provide a detailed performance analysis of the DE-Censor-Sum algorithm using which
the threshold $A$ and the parameter $h_\ell$ and $\mu_\ell$ can be selected.
We will use the performance analysis to show that,
under the additional assumption a result in \cite{mei-biometrica-2010}, 
the DE-Censor-Sum algorithm is also uniformly asymptotically optimal
for both Problem~\ref{prob:MultichannelLorden}
and Problem~\ref{prob:MultichannelPollak} (achieves the lower bound provided in Theorem~\ref{thm:Multichannel_LB} for
each $\kappa$),
for each fixed $\{\beta_\ell\}$ and $\{\sigma_\ell\}$, as $\alpha \to 0$.

\section{Asymptotic Optimality of the DE-Censor-Max Algorithm}
\label{sec:DEMAX_opt}
In this section we now show the asymptotic optimality of the DE-Censor-Max algorithm. 

We define the ladder variable \cite{wood-nonlin-ren-th-book-1982}
corresponding to sensor $\ell$:
\[
\tau_{\ell-} = \inf\left\{n \geq 1: \sum_{k=1}^n \log \frac{f_{1,\ell}(X_{k,\ell})}{f_{0,\ell}(X_{k,\ell})} < 0\right\},
\]
and note that $W_{\tau_{\ell-}}$ is the ladder height.
Also, let
\begin{equation*}
U_{D_\ell} = \hspace{-0.1cm}\left\{ \#\mbox{ times}: \sum_{k=1}^n  \log \frac{f_{1,\ell}(X_{k,\ell})}{f_{0,\ell}(X_{k,\ell})}>D_\ell \mbox{ before it's} < 0 \right\}.
\end{equation*}
Thus, $U_{D_\ell}$ is the number of times the random walk $\sum_{k=1}^n \log \frac{f_{1,\ell}(X_{k,\ell})}{f_{0,\ell}(X_{k,\ell})} $
is above $D_\ell$ before hitting $0$. We note that $U_{D_\ell}=0$ with a positive probability.
We note that $U_{D_\ell}$ is also the number of times the DE-CuSum statistic $W_{n,\ell}$ is above $D_\ell$ before hitting $0$.

Also for $x_\ell$ real define
\begin{equation}\label{eq:tauwl_def}
\tauwl(x_\ell, A) = \inf \{n \geq 1: W_{n,\ell} > A; \mbox{ with } W_{0,\ell}=x_\ell\}.
\end{equation}
Thus, $\tauwl(x_\ell, A)$ is the time for the DE-CuSum statistic at sensor $\ell$ to cross the threshold $A$ 
starting with the initial value $x_\ell$. It follows from Lemma 5 in \cite{bane-veer-IT-2013} that
\begin{equation}\label{eq:resettingLemma}
\Expect_1[\tauwl(x_\ell, A)] \leq \Expect_1[\tauwl(0, A)] + \lceil h_\ell/\mu_\ell\rceil, \; \forall \ell.
\end{equation}

\begin{theorem}\label{thm:DEMAX}
Let
\[
 0< D(f_{1,\ell}\; ||\; f_{0,\ell}) < \infty \;\mbox{ and } \; 0< D(f_{0,\ell}\; ||\; f_{1,\ell}) < \infty \quad \forall \ell.
\]
Let $\mu_\ell > 0$, $h_\ell < \infty$, $\forall \ell$, $D_\ell \geq 0$, and $A=\log \frac{L}{\alpha}$.
If the change occurs in the stream $\ell^*$, then we have
\begin{equation}
\label{eq:DEMaxPerf}
\begin{split}
\FAR(\Pidcm) &\leq \alpha, \\
\PDC_\ell(\Pidcm) &=\frac{\Expect_\infty[\tau_{\ell-}]}{\Expect_\infty[\tau_{\ell-}] +
\Expect_\infty[\lceil | W_{\tau_{\ell-}}^{h_\ell+}|/\mu_\ell \rceil]}, \;\forall \ell,\\
\PTC_\ell(\Pidcm) &=\frac{\Expect_\infty[U_{D_\ell}]}{\Expect_\infty[\tau_{\ell-}] +
\Expect_\infty[\lceil | W_{\tau_{\ell-}}^{h_\ell+}|/\mu_\ell \rceil]}, \;\forall \ell,\\
\WADD(\Pidcm) &\leq \frac{|\log \alpha|}{D(f_{1,\ell^*} \; || \; f_{0,\ell^*})}  (1+o(1)) \mbox{ as } \alpha \to 0.
\end{split}
\end{equation}
If $h_\ell=\infty$, $\forall \ell$, then
\begin{equation}
\label{eq:PDCApprox3}
\PDC_\ell(\Pidcm) \leq \frac{\mu_\ell}{\mu_\ell+D( f_{0,\ell} \; ||\; f_{1,\ell})}\; \forall \ell.
\end{equation}
\end{theorem}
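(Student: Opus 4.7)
The plan is to handle the five claims separately, exploiting two structural facts: (i) under $\Prob_\infty$ the statistics $\{W_{n,\ell}\}$ are independent across $\ell$ and each is the single-sensor DE-CuSum analyzed in Theorem~\ref{thm:DECuSumOpt}, and (ii) the DE-Censor-Max stopping rule is sandwiched between the max-CuSum rule (which controls FAR) and the local DE-CuSum at the affected stream (which controls delay).

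For the FAR bound, I would first observe that, since $A=\log(L/\alpha)\to\infty$, eventually $A>\max_\ell D_\ell$, so $\{Y_{n,\ell}>A\}=\{W_{n,\ell}>A\}$ and hence $\taudcm=\min_\ell \tauwl(0,A)$. The pathwise comparison $C_{n,\ell}\geq W_{n,\ell}$ from Theorem~\ref{thm:DECuSumOpt} then gives $\taudcm\geq\min_\ell\inf\{n:C_{n,\ell}>A\}=\taum$. The known bound $\Expect_\infty[\taum]\geq e^A/L$ from \cite{tart-veer-fusion-2002} combined with the choice $A=\log(L/\alpha)$ yields $\Expect_\infty[\taudcm]\geq 1/\alpha$, i.e., $\FAR(\Pidcm)\leq\alpha$.

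For the $\PDC_\ell$ and $\PTC_\ell$ identities I would argue that, under $\Prob_\infty$, $W_{n,\ell}$ depends only on the local stream $\{X_{n,\ell}\}$ and is a regenerative process returning to zero at the end of each cycle. A cycle decomposes into a ladder-height phase of random length $\tau_{\ell-}$ (during which the statistic starts at $0$, stays non-negative, and samples are taken) and a deterministic sleep phase of length $\lceil |W_{\tau_{\ell-}}^{h_\ell+}|/\mu_\ell\rceil$ (during which the statistic is incremented by $\mu_\ell$ without taking samples). The per-cycle reward is $\tau_{\ell-}$ for samples and $U_{D_\ell}$ for transmissions (since $D_\ell\geq 0$, transmissions only occur during the ladder-height phase); the renewal reward theorem then yields the two claimed expressions. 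The case $h_\ell=\infty$ is the $\PDC$ bound of Theorem~\ref{thm:DECuSumOpt} applied to stream $\ell$.

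For the WADD bound, crossing of $A$ by $W_{n,\ell^*}$ immediately triggers the max rule, so $\taudcm\leq \tauwlst(W_{\gamma-1,\ell^*},A)$ pathwise. Conditioning on $\boldsymbol{\mathcal{I}}_{\gamma-1}$ and applying~\eqref{eq:resettingLemma} gives
\[
\Expect_\gamma\brcs{(\taudcm-\gamma)^+\,\big|\,\boldsymbol{\mathcal{I}}_{\gamma-1}}\leq \Expect_1[\tauwlst(0,A)]+\lceil h_{\ell^*}/\mu_{\ell^*}\rceil
\]
uniformly in $\gamma$ and $\boldsymbol{\mathcal{I}}_{\gamma-1}$. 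Theorem~\ref{thm:DECuSumOpt} together with the CuSum asymptotics~\eqref{eq:LB_Lai_SingleSensor} gives $\Expect_1[\tauwlst(0,A)]\leq A/D(f_{1,\ell^*}\,\|\,f_{0,\ell^*})\,(1+o(1))$. Since $A=|\log\alpha|+\log L$, the $\log L$ and the additive constants are absorbed into the $(1+o(1))$ factor as $\alpha\to 0$, yielding the claim.

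The main obstacle I anticipate is the WADD step: one must verify that the essential supremum over the full network history $\boldsymbol{\mathcal{I}}_{\gamma-1}$ reduces to the worst-case initial state of only the local affected DE-CuSum (so that~\eqref{eq:resettingLemma}, which is stated per sensor, suffices), and that the resulting bound is uniform in $\gamma$. The FAR, PDC, and PTC pieces are essentially bookkeeping built on Theorem~\ref{thm:DECuSumOpt} and the max-CuSum FAR inequality from the multi-channel literature.
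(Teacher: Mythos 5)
Your proposal is correct and follows essentially the same route as the paper: the $\FAR$ via the pathwise domination $C_{n,\ell}\geq W_{n,\ell}$ and the MAX-CuSum bound of \cite{tart-veer-fusion-2002}, the $\PDC_\ell$ and $\PTC_\ell$ identities via the renewal-reward decomposition into ladder and sleep phases, and the $\WADD$ via bounding $\taudcm$ by the local DE-CuSum crossing time at $\ell^*$ together with \eqref{eq:resettingLemma}, whose uniformity in $x_{\ell^*}$ and $\gamma$ is exactly how the paper disposes of the essential supremum you flag as the main obstacle. No gaps.
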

\begin{proof}
The $\FAR$ result follows from Theorem 1 of \cite{tart-veer-fusion-2002} because the DE-CuSum 
statistic at each sensor is always greater than the CuSum statistic computed using the same set 
of observations; see Theorem~\ref{thm:DECuSumOpt}. 

The results on $\PDC_\ell$ follows from Theorem~\ref{thm:DECuSumOpt}.
The idea behind the proof is to define an on-off renewal process, with 
on times distributed according to the time for which the DE-CuSum statistic $W_{n,\ell}$ at sensor $\ell$ is above $0$, 
and the off times distributed according to the sojourn of the DE-CuSum statistic $W_{n,\ell}$ at sensor $\ell$ is above $0$. 
The result then follows from the renewal reward theorem. 
The results on $\PTC_\ell$ follows also from the renewal reward theorem and the arguments are almost identical to
those provide for $\PDC_\ell$.

The delay result holds because after change the max of statistics is greater than the statistics in which the change has
taken place. Thus, the delay of the DE-Censor-Max algorithm is bounded from above by the delay of the
DE-CuSum algorithm when applied to the outlying sensor. Mathematically, the argument is as follows.

We obtain an upper bound on $\Expect_\gamma \left[ (\taudcm-\gamma)^+ | \boldsymbol{\mathcal{I}}_{\gamma-1} \right]$
that is not a function of $\gamma$ and the conditioning $\boldsymbol{\mathcal{I}}_{\gamma-1}$, and that scales as the lower bound in Theorem~\ref{thm:Multichannel_LB}. The theorem is then established if we then take the essential supremum and then
the supremum over $\gamma$.

Let $\boldsymbol{\mathcal{I}}_{\gamma-1}=\boldsymbol{i}_{\gamma-1}$ be such that
$W_{\gamma-1, \ell}=x_\ell$, $x_\ell \in [-h_\ell, \infty)$.
We first note that for $A > \max_\ell D_\ell$,
\begin{equation}
\begin{split}
\Expect_\gamma \left[ (\taudcm-\gamma)^+ |  \boldsymbol{\mathcal{I}}_{\gamma-1}=\boldsymbol{i}_{\gamma-1} \right]
\leq \Expect_1\left[\tauwlst(x_{\ell^*}, A)\right],
\end{split}
\end{equation}
where $\tauwl(x_\ell,A)$ is as defined in \eqref{eq:tauwl_def}. Then from \eqref{eq:resettingLemma} we have
\begin{equation}
\begin{split}
\Expect_\gamma &\left[ (\taudcm-\gamma)^+  |  \boldsymbol{\mathcal{I}}_{\gamma-1}  = \boldsymbol{i}_{\gamma-1} \right]\\
& \leq \Expect_1\left[\tauwlst(x_{\ell^*},A)\right] \leq \Expect_1[\tauwlst(0,A)] + \lceil h_{\ell^*}/\mu_{\ell^*} \rceil.
\end{split}
\end{equation}
The result now follows from the proof of Theorem~\ref{thm:DECuSumOpt} on the DE-CuSum algorithm because the change 
is assumed to affect the sensor $\ell^*$ and $h_{\ell^*} < \infty$. 
\end{proof}
Since $\CADD \leq \WADD$, we also have under the same assumptions as in Theorem~\ref{thm:DEMAX}
\begin{equation}
\begin{split}
\CADD(\Pidcm) \leq \frac{|\log \alpha|}{D(f_{1,\ell^*} \; || \; f_{0,\ell^*})}   (1+o(1)), \mbox{ as } \alpha \to 0.
\end{split}
\end{equation}

From Theorem~\ref{thm:Multichannel_LB}, the $\WADD$, and hence the $\CADD$ performance of the
DE-Censor-Max algorithm is the best one can do when the change affects the stream $\ell^*$,
for given $\{\beta_\ell\}$ and $\{\sigma_\ell\}$, as $\alpha \to 0$.
Also, the $\PDC_\ell$ and the $\PTC_\ell$ performances do not depend on the threshold $A$, thus
the constraints $\{\beta_\ell\}$ and $\{\sigma_\ell\}$ can be satisfied independent of the $\FAR$ constraint $\alpha$.
Hence, the DE-Censor-Max algorithm is asymptotically optimal when the change affects exactly one stream,
for both Problem~\ref{prob:MultichannelLorden}
and Problem~\ref{prob:MultichannelPollak}, for each given
$\{\beta_\ell\}$ and $\{\sigma_\ell\}$, as $\alpha \to 0$.

\subsection{Performance Analysis of the DE-Censor-Sum Algorithm}
\label{sec:DECENSORSUM_opt}

In this section we provide the performance analysis of the DE-Censor-Sum algorithm and comment 
on its asymptotic optimality. In Theorem~\ref{thm:DECensorSum} to be proved below, the delay 
analysis of the DE-Censor-Sum depends on the delay analysis of the DE-All algorithm that we studied 
in \cite{bane-veer-sqa-2014}. We first define the DE-All algorithm. 

In the DE-All algorithm, the DE-CuSum algorithm (see Algorithm~\ref{algo:DECuSum})
is used at each sensor,
 and a ``1'' is transmitted
each time the DE-CuSum statistic at any sensor is above a threshold. A change is declared the first
time a ``1'' is received at the fusion center
 from \textit{all} the sensors at the same time.

Let
\[d_\ell = \frac{D(f_{1,\ell} \; || \; f_{0,\ell})}{\sum_{k=1}^L D(f_{1,k} \; || \; f_{0,k})}.\]
\begin{algorithm}[$\mathrm{DE-All}$: $\Piall$]
\label{algo:DE-All}
Start with $W_{0,\ell}=0$ $\forall \ell$. Fix $\mu_\ell > 0$, $h_\ell\geq0$, and  $A \geq 0$.
For $n\geq 0$ use the following control:
\begin{enumerate}
\item Use the DE-CuSum algorithm at each sensor $\ell$, i.e., update the statistics $\{W_{n,\ell}\}_{\ell=1}^L$
for $n\geq 1$ using
\begin{equation*}
\begin{split}
S_{n+1, \ell} &= 1 \text{~only if~} W_{n,\ell} \geq 0 \\
W_{n+1,\ell} &=
\begin{cases}
\min\{W_{n,\ell} + \mu_\ell, 0\} & \hspace{-0.3cm}\text{ if } S_{n+1,\ell} = 0\\
\left(W_{n,\ell} +\log \frac{f_{1,\ell}(X_{n+1,\ell})}{f_{0,\ell}(X_{n+1,\ell})}\right)^{h+} & \hspace{-0.3cm}\text{ if } S_{n+1,\ell} = 1,
\end{cases}
\end{split}
\end{equation*}
where $(x)^{h+} = \max\{x, -h\}$.
\item Transmit
\[
Y_{n,\ell} = \indic_{\{W_{n,\ell} > d_\ell A\}}.
\]
\item At the fusion center stop at
\[
\taudeall = \inf\{n \geq 1: Y_{n,\ell}=1 \mbox{ for all } \ell \in \{1, \cdots, L\}\}.
\]
\end{enumerate}
\end{algorithm}

We now provide the main result of this section. For that we define another variable:
\begin{equation}\label{eq:tauCl_def}
\taucl(x_\ell, A) = \inf \{n \geq 1: C_{n,\ell} > A; \mbox{ with } C_{0,\ell}=x_\ell\}.
\end{equation}
Thus, $\taucl(x_\ell, A)$ is the time for the CuSum statistic at sensor $\ell$ $C_{n,\ell}$, to cross the threshold $A$ 
starting with the initial value $x_\ell$. 

\begin{theorem}\label{thm:DECensorSum}
Let
\[
 0< D(f_{1,\ell}\; ||\; f_{0,\ell}) < \infty \; \mbox{ and } \; 0< D(f_{0,\ell}\; ||\; f_{1,\ell}) < \infty \quad \forall \ell.
\]
Let $\mu_\ell > 0$, $h_\ell < \infty$, $\forall \ell$, $D_\ell \geq 0$, and $A=L \log \frac{L}{\alpha}$.
If the change affects the subset $\kappa$ of streams, then we have
\begin{equation}
\label{eq:DESumPerf}
\begin{split}
\FAR(\Pidcs) &\leq \alpha, \\
\PDC_\ell(\Pidcs) &=\frac{\Expect_\infty[\tau_{\ell-}]}{\Expect_\infty[\tau_{\ell-}] +
\Expect_\infty[\lceil | W_{\tau_{\ell-}}^{h_\ell+}|/\mu_\ell \rceil]}, \; \forall \ell\\
\PTC_\ell(\Pidcs) &=\frac{\Expect_\infty[U_{D_\ell}]}{\Expect_\infty[\tau_{\ell-}] +
\Expect_\infty[\lceil | W_{\tau_{\ell-}}^{h_\ell+}|/\mu_\ell \rceil]},\; \forall \ell\\
\WADD ( \Pidcs) &\\
& \hspace{-0.5cm}\leq \frac{A}{\sum_{i=1}^m D(f_{1,k_i} \; || \; f_{0,k_i})}  (1+o(1)) \mbox{ as } A \to \infty.
\end{split}
\end{equation}
If $h_\ell=\infty$, $\forall \ell$, then
\begin{equation}
\PDC_\ell(\Pidcs) \leq \frac{\mu_\ell}{\mu_\ell+D( f_{0,\ell} \; ||\; f_{1,\ell})}\; \forall \ell.
\end{equation}
\end{theorem}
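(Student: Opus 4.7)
For the $\FAR$ bound, Theorem~\ref{thm:DECuSumOpt} supplies the pathwise inequality $W_{n,\ell}\le C_{n,\ell}$ at every sensor, and censoring further gives $Y_{n,\ell}\le W_{n,\ell}\le C_{n,\ell}$. Hence $\sum_\ell Y_{n,\ell}\le\sum_\ell C_{n,\ell}$, so $\taudcs$ dominates the $N_{\mathrm{sum}}$ stopping time of \cite{mei-biometrica-2010} at the same threshold. The choice $A=L\log(L/\alpha)$ is the one Mei uses to ensure the $\FAR$ of $N_{\mathrm{sum}}$ is at most $\alpha$, so $\FAR(\Pidcs)\le\alpha$ follows at once.

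The $\PDC_\ell$ identity (and the $h_\ell=\infty$ bound) are immediate from Theorem~\ref{thm:DECuSumOpt}: the observation control at sensor $\ell$ is a measurable function of $W_{n,\ell}$ alone, so sensor $\ell$'s sampling pattern is identical in law to that of a stand-alone DE-CuSum, and the single-sensor formulas apply verbatim. The $\PTC_\ell$ identity comes from the same on--off renewal decomposition: each local cycle under $\Prob_\infty$ has mean length $\Expect_\infty[\tau_{\ell-}]+\Expect_\infty[\lceil|W_{\tau_{\ell-}}^{h_\ell+}|/\mu_\ell\rceil]$, and a transmission occurs at a time slot iff that slot lies in the ``on'' half of a cycle at a time at which the random walk exceeds $D_\ell$; so the expected number of transmissions per cycle is $\Expect_\infty[U_{D_\ell}]$ and the renewal-reward theorem delivers the stated ratio.

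The delay bound is the technical core. I would fix $\boldsymbol{\mathcal{I}}_{\gamma-1}=\boldsymbol{i}_{\gamma-1}$ with $W_{\gamma-1,\ell}=x_\ell\in[-h_\ell,\infty)$ and seek a bound independent of $\gamma$ and of the conditioning. Dropping the non-outlying streams (whose $Y_{n,\ell}$ remain nonnegative post-change) and noting that each outlying $W_{n,k_i}$ crosses $D_{k_i}$ in finite expected time (since $h_{k_i}<\infty$ and $D(f_{1,k_i}\|f_{0,k_i})>0$), I would write
\[
\Expect_\gamma\brcs{(\taudcs-\gamma)^+\mid\boldsymbol{\mathcal{I}}_{\gamma-1}=\boldsymbol{i}_{\gamma-1}}\le\Expect_1\brcs{\inf\set{n\ge 1:\textstyle\sum_{i=1}^m W_{n,k_i}>A}}+O(1),
\]
with initial conditions $W_{0,k_i}=x_{k_i}$. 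The resetting lemma \eqref{eq:resettingLemma} then replaces each $x_{k_i}$ by $0$ at the cost of an additive $\lceil h_{k_i}/\mu_{k_i}\rceil$ term. What is left is precisely the delay analysis of $N_{\mathrm{sum}}$ on the outlying subset $\kappa$, which Mei resolves by nonlinear renewal theory applied to the joint post-change random walk of positive drift $\sum_i D(f_{1,k_i}\|f_{0,k_i})$, yielding the stated $A/\sum_i D(f_{1,k_i}\|f_{0,k_i})(1+o(1))$ bound as $A\to\infty$.

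The main obstacle is keeping the delay estimate uniform in the conditioning $\boldsymbol{i}_{\gamma-1}$ and in the unknown outlying subset $\kappa$. Uniformity in $\kappa$ is painless because the final bound depends on $\kappa$ only through $\sum_i D(f_{1,k_i}\|f_{0,k_i})$, which appears explicitly on the right-hand side. Uniformity over the initial conditions $\{x_\ell\}$ is supplied by the resetting lemma, and this is where $h_\ell<\infty$ is indispensable: an unbounded cap would permit an adversarial initial condition to force a local DE-CuSum to sleep arbitrarily long after $\gamma$, destroying the leading-order delay estimate.
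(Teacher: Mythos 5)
Your $\FAR$, $\PDC_\ell$, and $\PTC_\ell$ arguments are fine. The $\FAR$ route (dominate Mei's $N_{\mathrm{sum}}$ pathwise via $Y_{n,\ell}\le W_{n,\ell}\le C_{n,\ell}$) differs from the paper, which instead uses $\{\sum_\ell W_{n,\ell}>A\}\subset\{\max_\ell W_{n,\ell}>A/L\}$ to reduce to $\FAR(\Pidcm(A/L))\le\alpha$; both work, though your claim that $A=L\log(L/\alpha)$ is ``the threshold Mei uses'' is off — Mei's threshold is of order $\log(1/\alpha)$, and it is precisely because the present theorem uses the conservative $L\log(L/\alpha)$ that it falls short of optimality by a factor of $L$. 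The $\PDC_\ell$/$\PTC_\ell$ renewal-reward argument matches the paper.

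The delay bound is where you have a genuine gap. After restricting to the affected subset $\kappa$ (correct, and the same first step as the paper), you propose to finish by invoking ``precisely the delay analysis of $N_{\mathrm{sum}}$ on the outlying subset'' via nonlinear renewal theory for the joint random walk. But the summands here are DE-CuSum statistics, not CuSum statistics: post-change each $W_{n,k_i}$ still dips below zero with positive probability and then sleeps, and the only pathwise comparison available, $W_{n,\ell}\le C_{n,\ell}$, points the wrong way — it shows $\sum_i W_{n,k_i}$ crosses $A$ \emph{no earlier} than $\sum_i C_{n,k_i}$, so Mei's analysis of $N_{\mathrm{sum}}$ gives no upper bound on $\Expect_1[\inf\{n:\sum_i W_{n,k_i}>A\}]$. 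The resetting lemma \eqref{eq:resettingLemma} handles only the initial conditions, not this. The paper's missing device is the inclusion
\begin{equation*}
\left\{ W_{n,\ell} > d_\ell A \ \forall \ell\right\} \subset \left\{ \textstyle\sum_\ell W_{n,\ell} > A \right\},
\end{equation*}
which bounds $\taudcs$ by the DE-All stopping time $\taudeall$; the delay of DE-All was already controlled in \cite{bane-veer-sqa-2014} by $\Expect_1[\max_\ell \taucl]+\mathrm{constant}$, reducing everything to single-stream DE-CuSum-versus-CuSum comparisons (where $h_\ell<\infty$ enters, as you correctly anticipate) plus the known growth rate $A/\sum_\ell D(f_{1,\ell}\,||\,f_{0,\ell})$ of $\Expect_1[\max_\ell\taucl]$ from \cite{mei-ieeetit-2005}. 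You need this, or an equivalent argument showing the sum of DE-CuSum statistics trails the sum of CuSums by only an additive constant, to close the proof.
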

\begin{proof}
The proofs on $\PDC_\ell$ and $\PTC_\ell$ are identical to that provided in the Theorem~\ref{thm:DEMAX}.

For the $\FAR$ note that
\[
\left\{\sum_{\ell=1}^L W_{n,\ell} > A\right\} \subset \left\{\max_{\ell\in \{1,\cdots,L\}} W_{n,\ell} > \frac{A}{L}\right\}.
\]
For simplicity we write $\Pidcs(A)$ to represent DE-Censor-Sum algorithm when the threshold used at the fusion center is $A$.
Similarly we use $\Pidcm(A/L)$ to represent DE-Censor-Max algorithm when the threshold used at the fusion center is $A/L$.
Then the above subset relation implies
\[
\FAR(\Pidcs(A)) \leq \FAR(\Pidcm(A/L)).
\]
The $\FAR$ result follows because from Theorem~\ref{thm:DEMAX}
we have that
\[
\FAR(\Pidcm(A/L)) \leq \alpha \quad \mbox{ if } \quad A/L = \log L/\alpha.
\]

For the $\WADD$ analysis, let $\taudcs(\kappa)$ denote the DE-Censor-Sum algorithm
applied to only the streams in the affected subset $\kappa$. Further let
$\boldsymbol{\mathcal{I}}_{\gamma-1}(\kappa)$ denote the information in the affected streams.
Then
\begin{equation}\label{eq:DESumProof_1}
\begin{split}
\Expect_\gamma \left[ (\taudcs-\gamma)^+ |  \boldsymbol{\mathcal{I}}_{\gamma-1}=\boldsymbol{i}_{\gamma-1} \right] &\\
&\hspace{-2.5cm}\leq \Expect_\gamma \left[ (\taudcs(\kappa)-\gamma)^+ |  \boldsymbol{\mathcal{I}}_{\gamma-1}=\boldsymbol{i}_{\gamma-1} \right] \\
&\hspace{-2.5cm}=\Expect_\gamma \left[ (\taudcs(\kappa)-\gamma)^+ |  \boldsymbol{\mathcal{I}}_{\gamma-1}(\kappa)=\boldsymbol{i}_{\gamma-1}(\kappa) \right].
\end{split}
\end{equation}
In the above equation the last equality is true because the observations across the streams are independent conditioned 
on the change point. 
Because of the above inequality, we can assume that the change affects all the subsets at the same time, i.e.,
$\kappa = \{1,\cdots, L\}$.

Now note that any fixed $A$ (see Algorithm~\ref{algo:DE-All})
\[
\left\{ W_{n,\ell} > d_\ell A, \; \forall \ell\right\} \subset \left\{ \sum_\ell W_{n,\ell} > \sum_\ell d_\ell A = A \right\}.
\]
Hence, for $A$ sufficiently large and from the proof of Theorem 6.1 in \cite{bane-veer-sqa-2014}, we have
\begin{equation}
\begin{split}
\Expect_\gamma \left[ (\taudcs-\gamma)^+ |  \boldsymbol{\mathcal{I}}_{\gamma-1}=\boldsymbol{i}_{\gamma-1} \right] &\\
&\hspace{-2.5cm} \leq \Expect_\gamma \left[ (\taudeall-\gamma)^+ |  \boldsymbol{\mathcal{I}}_{\gamma-1}=\boldsymbol{i}_{\gamma-1} \right]\\
&\hspace{-2.5cm} \leq \Expect_1\left[\max_{1\leq \ell \leq L} \taucl \right] + \mbox{ constant}.
\end{split}
\end{equation}
The proof of the theorem is now complete because we can now take $\esssup$ and then $\sup$ over
$\gamma$ on the left-hand side. Then, from \cite{mei-ieeetit-2005} it follows that $\Expect_1\left[\max_{1\leq \ell \leq L} \taucl \right]$ grows
in the order $\frac{A}{\sum_{\ell=1}^L D(f_{1,\ell} \; || \; f_{0,\ell})}$ . This when applied to the affected subset $\kappa$
gives us the desired result on the $\WADD$ from \eqref{eq:DESumProof_1}.
\end{proof}

Note that the above theorem does not imply the asymptotic optimality of the DE-Censor-Sum algorithm, mainly
due to the fact that the choice of the threshold is conservative. It only gives a delay bound of $L$ times that of the 
lower bound in Theorem~\ref{thm:Multichannel_LB}. 
However, if the threshold can be set to be of the order $\log 1/\alpha$ to satisfy the $\FAR$ constraint,
then the above theorem establishes the uniform asymptotic optimality of the DE-Censor-Sum algorithm for each possible post-change
distribution. It is shown in \cite{mei-biometrica-2010} that such a result is indeed true, 
and therefore 
under the conditions of Theorem~\ref{thm:DECensorSum} above, 
the DE-Censor-Sum algorithm is uniformly asymptotically 
optimal, for each possible $\kappa$, for each fixed $\{\beta_\ell\}$ and $\{\sigma_\ell\}$, as $\alpha \to 0$. 
\medskip

\section{Numerical Results}
\label{sec:DEMultiNumerical}
We first compare the performance of the DE-Censor-Sum algorithm, the DE-Censor-Max algorithm and the
Oracle CuSum algorithm as a function of the number of affected stream. The Oracle
CuSum algorithm uses all of the sensor observations, and it knows 
the indices of the outlying sequences.
We plot the $\CADD$ versus the number of affected stream comparison in Fig.~\ref{fig:MAX_SUM_Comp} for the parameters:
$\FAR=10^{-3}$, $L=100$, $f_{0,\ell}=f_0=\mathcal{N}(0,1)$, $\forall \ell$, $f_{1,\ell}=f_1=\mathcal{N}(0.5,1)$, $\forall \ell$, and
for the $\{\PDC_\ell\}$ and $\{\PTC_\ell\}$ constraints of $\beta_\ell=\sigma_\ell=0.5$, $\forall \ell$. We also set the local
thresholds $D_\ell=0$, $\forall \ell$.
\begin{figure}[htb]
\center
\includegraphics[width=9cm, height=5cm]{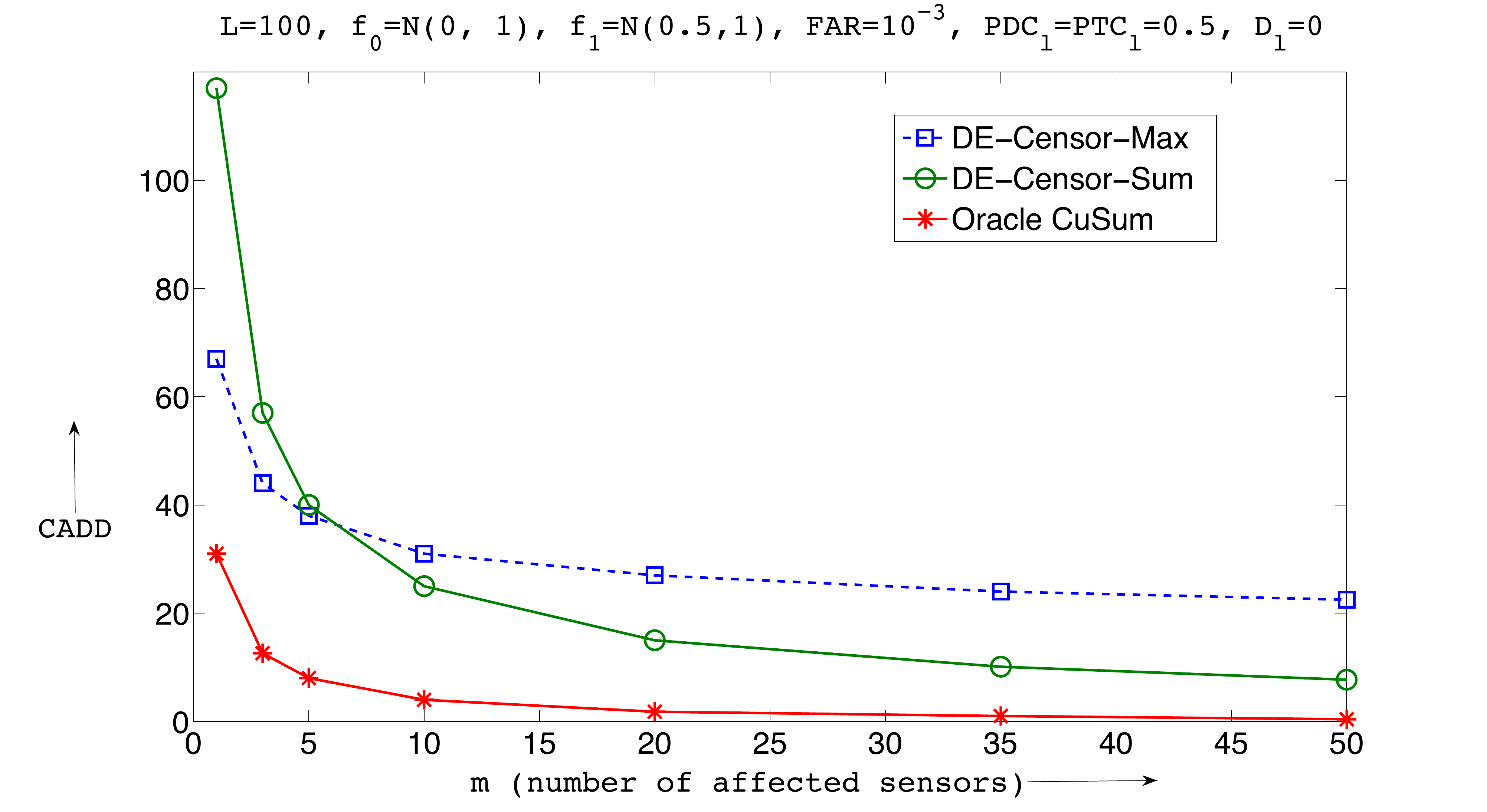}
\caption{{Comparison of DE-Censor-Sum algorithm, the DE-Censor-Max algorithm and the
Oracle CuSum algorithm as a function of the number of outlying streams.}}
\label{fig:MAX_SUM_Comp}
\end{figure}
In the figure we see that the DE-Censor-Max scheme outperforms the DE-Censor-Sum scheme when
the number of affected streams is small. This is because the former is optimal when the number of affected stream is
exactly one. However, when the number of affected streams is large, the DE-Censor-Sum algorithm outperforms the
DE-Censor-Max algorithm. We note that this is consistent with the observations made in \cite{mei-biometrica-2010}
regarding the comparison between the MAX and SUM algorithms. 

In Fig.~\ref{fig:MAX_Frac_CentCuSum} we compare the $\CADD$ vs $\FAR$ performance of the DE-Censor-Sum algorithm with the fractional sampling scheme
for $L=10$, $f_{0,\ell}=f_0=\mathcal{N}(0,1)$, $\forall \ell$, $f_{1,\ell}=f_1=\mathcal{N}(0.2,1)$, $\forall \ell$, and
for the $\{\PDC_\ell\}$ and $\{\PTC_\ell\}$ constraints of $\beta_\ell=\sigma_\ell=0.5$, $\forall \ell$. We consider
the post-change scenario when $m=7$.
We restrict our numerical study to the comparison of the $\CADD$ performance. Similar comparison can
be obtained for the $\WADD$ as well.

In the fractional sampling scheme, the CuSum algorithm is used at each sensor,
and samples are skipped based on the outcome of a sequence of fair coin tosses,
independent of the observation process.
If an observation is taken at a sensor, the CuSum statistic is transmitted to the fusion center.
Thus, achieving the constraints on the $\{\PDC_\ell\}$ and $\{\PTC_\ell\}$, $\forall \ell$. At the fusion center
a change is declared the first time
the sum of the CuSum statistics from all the sensors crosses a threshold. At the fusion center,
in the absence of any transmission from a sensor, its CuSum statistics from the last time instant is
used to compute the sum.
For the DE-Censor-Sum algorithm, we set $D_\ell=0$, $\{h_\ell=h = 10\}$, $\forall \ell$,
and use the approximation \eqref{eq:PDCApprox3} to select $\mu_\ell$.
This ensures that the $\{\PDC_\ell\}$ and $\{\PTC_\ell\}$ constraints are satisfied for
the DE-Censor-Sum algorithm. 
In the figure we see that the DE-Censor-Sum algorithm provides a significant
gain in performance as compared to the approach of fractional sampling.

\begin{figure}[htb]
\center
\includegraphics[width=9cm, height=5cm]{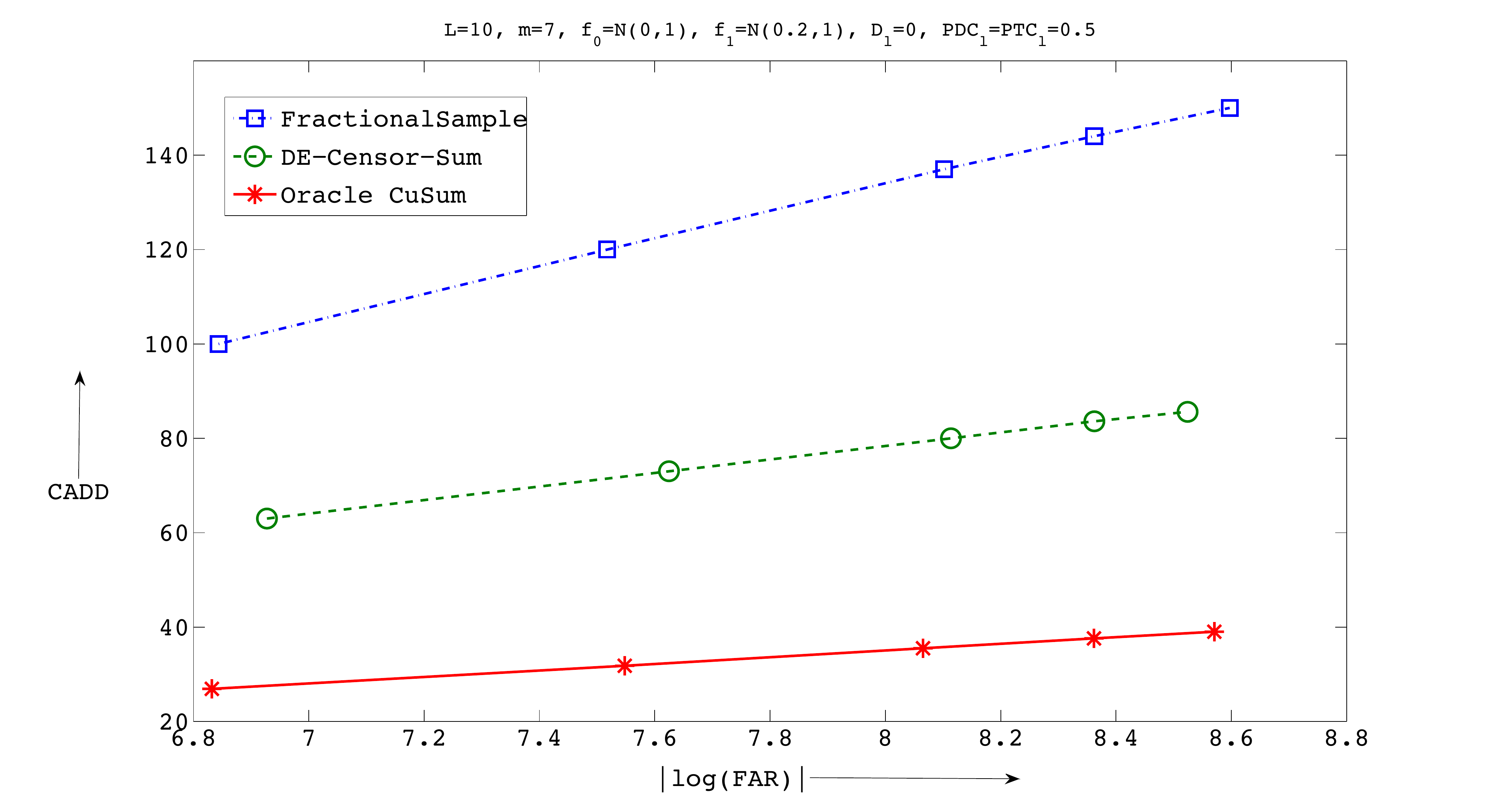}
\caption{{Comparison of the DE-Censor-Sum algorithm with the fractional sampling scheme.}}
\label{fig:MAX_Frac_CentCuSum}
\end{figure}

\section{Conclusions}
In this paper we proposed two data-efficient algorithms, 
the DE-Censor-Max algorithm and the DE-Censor-Sum algorithm, for quickest 
outlying sequence detection inn sensor networks when
the subset of affected sensors post-change is unknown to the decision maker. 
We provided a detailed performance analysis of these algorithms and compared 
their performance as a function of the number of affected sensors. 
We showed that the DE-Censor-Max algorithm is asymptotically optimal for 
the proposed formulations if exactly one sensor is affected post-change. 
Also, if the threshold can be appropriately selected, then the DE-Censor-Sum 
algorithm is asymptotically optimal, for every possible post-change scenario. 
We also showed via simulations that the DE-Censor-Max algorithm performs better 
if the number of affected sensors is small. Moreoever, our algorithms for 
observation control provide significant benefit over the approach of fractional sampling. 

\footnotesize
\bibliographystyle{ieeetr}



\bibliography{QCD_verSubmitted}
\end{document}